\documentclass[11pt,letterpaper]{amsart}
\usepackage{graphicx}
\usepackage{amsmath} 
\usepackage{amsthm,amsfonts,amssymb,mathrsfs,amscd,amstext,amsbsy} 
\usepackage{epic,eepic} 
\usepackage{yfonts}
\usepackage{paralist,enumerate}
\usepackage[all]{xy}
\usepackage{hyperref}
\usepackage{mathtools}
\hypersetup{colorlinks}

\newtheorem{theorem}{Theorem}[section]

\newtheorem{theo}[theorem]{Theorem}
\newtheorem{lem}[theorem]{Lemma}

\newtheorem{que}[theorem]{Question}

\newtheorem*{Definition*}{Definition}
\def\qed{\hfill \ifhmode\unskip\nobreak\fi\quad\ifmmode\Box\else$\Box$\fi\\ }
\begin{document}
\title[Weights of oriented $S^1$-manifold with isolated fixed points]{Weights of circle actions on oriented manifolds with isolated fixed points}
\author{Donghoon Jang}
\thanks{MSC 2020: primary 58C30, secondary 58J20}
\thanks{Key words: oriented manifold, circle action, fixed point, weight}
\thanks{This work was supported by a 2-Year Research Grant of Pusan National University.}
\address{Department of Mathematics and Institute of Mathematical Science, Pusan National University, Pusan, South Korea}
\email{donghoonjang@pusan.ac.kr}
\begin{abstract}
For an action of the circle group $S^1$ on a compact oriented manifold with isolated fixed points, there is a claim that weights at the fixed points occur in pairs. This phenomenon holds for other types of $S^1$-manifolds, e.g., (almost) complex, symplectic, and unitary  manifolds. A known proof of this claim assumes that the isotropy submanifolds are orientable. However, this assumption does not hold in general.
We show the claim without relying on that assumption.
\end{abstract}
\maketitle
\section{Introduction}

For an action of a Lie group on a manifold, the data on the fixed point set is essential information for classifying such a manifold. In certain circumstances, this local data uniquely determines the manifold.

For an action of the circle group $S^1$ with isolated fixed points on certain types of compact manifolds, such as (almost) complex, symplectic, or unitary, there is an interesting phenomenon: weights at the fixed points occur in pairs. 
For example, for an $S^1$-action on a compact almost complex manifold with a discrete fixed point set, for each integer $w$,
\begin{center}
$\displaystyle \sum_{p \in M^{S^{1}}} N_{p}(w)=\sum_{p \in M^{S^{1}}} N_{p}(-w)$,
\end{center}
where $N_{p}(w)$ is the number of times $w$ occurs as a weight at $p$ and the sum runs over all fixed points \cite{H}. As a consequence, weights $w$ and $-w$ occur in a pair. 
Moreover, in the pair $(w,-w)$, the weights $w$ and $-w$ can be chosen from distinct fixed points, which lie in the same component of the $\mathbb{Z}_w$-fixed point set $M^{\mathbb{Z}_w}$ of $M$ \cite{JT}; here, the group $\mathbb{Z}_w$ acts on $M$ as a cyclic subgroup of the circle group $S^1$. 
Also, see \cite{J4} for an extension of this result to unitary $S^1$-manifolds.

Now, suppose that $M$ is a compact, oriented manifold, and let the circle group $S^1$ act effectively on $M$ with a discrete fixed point set. In this case, unlike the case of almost complex manifolds, the sign of each weight at a fixed point is not well-defined, and we can choose each weight to be positive; for details, see the next section. Let $w \geq 1$ be an integer. It is known that if $w$ is odd, then the $\mathbb{Z}_w$-fixed point set $M^{\mathbb{Z}_w}$ is orientable (see Lemma~\ref{odd-ori}.)
As for other types of manifolds,
it has been assumed that when $w$ is even, a (connected) component of the $\mathbb{Z}_w$-fixed point set $M^{\mathbb{Z}_w}$ is also orientable, provided it contains an $S^1$-fixed point. This assumption led to a result, analogous to the case for other types of manifolds, stating that weights at the fixed points can be split into pairs such that, in each pair $(w_{p,i},w_{q.j})$, the weights $w_{p,i}$ and $w_{q,j}$ are equal and come from distinct fixed points $p \neq q$. Moreover, the fixed points $p$ and $q$ lie in the same component $F$ of $M^{\mathbb{Z}_w}$ of the $\mathbb{Z}_w$-fixed set of $M$, where $w=w_{p,i}=w_{q,j}$ and $F$ is orientable. 

On the other hand, it has been shown that when $w$ is even, a $\mathbb{Z}_w$-fixed point set $M^{\mathbb{Z}_w}$ containing an $S^1$-fixed point need not be orientable, provided $\dim M \geq 6$ \cite{W}. This non-orientability of an isotropy submanifold in an oriented $S^1$-manifold contrasts with the case of other types of manifolds; if an action of a group on a manifold preserves an (almost) complex, symplectic, or unitary structure, then its isotropy submanifold is also (almost) complex, symplectic, or unitary, respectively.

In this note, we show that weights can still be split into pairs, even when $M^{\mathbb{Z}_w}$ is not necessarily orientable for even $w$.

\begin{theo} [Theorem~\ref{match}$+$Theorem~\ref{odd}]
Let the circle group $S^1$ act on a compact oriented manifold $M$ with a discrete fixed point set. Then the multiset $\{w_{p,i}\}_{1 \leq i \leq n, p \in M^{S^1}}$ of weights $w_{p,i}$ at the fixed points $p$ can be split into pairs $(w_{p,i},w_{q,j})$ such that 
\begin{enumerate}
\item $w_{p,i}=w_{q,j}$, and 
\item $p \neq q$ or $i \neq j$ if $w_{p,i}$ is even, and $p \neq q$ if $w_{p,i}$ is odd. 
\end{enumerate}
\end{theo}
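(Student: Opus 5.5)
The plan is to treat the two parities of $w$ separately; these are Theorem~\ref{odd} and Theorem~\ref{match}. For a fixed integer $w \geq 1$, let $N(w)=\sum_{p} N_p(w)$ be the total number of pairs $(p,i)$ with $w_{p,i}=w$.

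\textbf{Even $w$.} The only constraint is $p\neq q$ or $i\neq j$, so it suffices to show that $N(w)$ is even; the pairing is then arbitrary. I would work one component $F$ of $M^{\mathbb{Z}_w}$ at a time, where $F$ contains $S^1$-fixed points and may be non-orientable. It is a closed submanifold carrying an action of $S^1/\mathbb{Z}_w\cong S^1$ with isolated fixed points $F\cap M^{S^1}$. At each such $p$, the weights of this action are $w_{p,i}/w$ for those $i$ with $w\mid w_{p,i}$. The weight $w$ of $M$ becomes weight $1$ on $F$.
\begin{itemize}
\item To avoid orientability of $F$, I would use mod $2$ information only. One option is the orientation double cover $\tilde F$, with the action lifted to a double cover of the circle if necessary. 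Another is a mod $2$ localization/Stiefel--Whitney number argument on $F$.
\item Either route should give the parity of the number of weight-$1$ directions summed over the fixed points of $F$.
\item I would run a downward induction on $w$, starting from the largest weight. The parity of the number of weights that are proper multiples of $w$ at points of $F$ is then already known. Subtracting it from $\sum_{p\in F}\dim F/2$ isolates the parity of the weight-$w$ count.
\end{itemize}

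\textbf{Odd $w$.} Here Lemma~\ref{odd-ori} says every component $F$ of $M^{\mathbb{Z}_w}$ is orientable, and I need a stronger statement: a pairing with $p\neq q$.
\begin{itemize}
\item Fix an orientation of $F$. Each fixed point $p\in F$ then gets a sign $\epsilon_F(p)$, obtained by comparing the orientation of $T_pF$ with the one induced by the positive weights.
\item Apply the Atiyah--Bott--Berline--Vergne localization (or the Atiyah--Singer signature formula) to the $S^1/\mathbb{Z}_w$-action on $F$. The rigidity identity $\sum_{p\in F}\epsilon_F(p)\prod_i \frac{1+t^{w_{p,i}/w}}{1-t^{w_{p,i}/w}}=\mathrm{sign}(F)$ (product over weights tangent to $F$) is a Laurent polynomial identity.
\item Comparing lowest-order coefficients, I aim to show $\sum_{p\in F}\epsilon_F(p)N_p(w)=0$. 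In words, the weight-$w$ occurrences at positively signed points equal, in number, those at negatively signed points.
\item Match each positive occurrence with a negative one. Each fixed point has a single sign, so matched weights come from distinct points $p\neq q$, both lying in $F$.
\end{itemize}

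\textbf{Main obstacle.} I expect the hardest step to be extracting the weight-$w$ count from the localization identity. The expansion also involves weights that are proper multiples of $w$, and the factors $(1+t^a)/(1-t^a)$ mix orders. I would first try the downward induction on $w$ over the multiples $kw$, using the components of $F^{\mathbb{Z}_{kw}}$. The even case's mod-$2$ replacement for the sign $\epsilon_F$ is the second delicate point, since the signs are genuinely unavailable there.
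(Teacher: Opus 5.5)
Your odd-$w$ argument is essentially the paper's Theorem~\ref{odd}: orientability of $F$ from Lemma~\ref{odd-ori}, then Lemma~\ref{small} applied on $F$. The ``main obstacle'' you flag there is not real. Every weight tangent to $F$ is a multiple of $w$, so $w$ is automatically the smallest weight of the action on $F$. Hence the $t^w$-coefficient is exactly $2\sum_{p\in F}\epsilon_F(p)N_p(w)$, with no contribution from proper multiples, and no induction is needed.

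The gap is in the even case. Your key step is that a mod-$2$ argument on a component $F$ of $M^{\mathbb{Z}_w}$ yields the parity of $\sum_{p\in F}N_p(w)$. This is asserted but not supplied, and neither route you suggest can supply it as stated.
\begin{enumerate}
\item On the orientation double cover $\tilde F$, the action lifts canonically, and each $p\in F^{S^1}$ has two fixed lifts with the same weights. The deck involution is equivariant and orientation-reversing, so the two lifts have opposite signs. Lemma~\ref{small} or Theorem~\ref{match} applied to $\tilde F$ therefore return only tautologies: $N_p(w)-N_p(w)=0$, and $2N_p(w)$ is even.
\item Any argument that uses only the fact that $F$ is a closed circle manifold with isolated fixed points cannot work. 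The rotation action on $\mathbb{RP}^2$ (induced from $\mathbb{C}\oplus\mathbb{R}$) has a single fixed point with the single weight $1$, so its weight-$1$ count is odd.
\item For the same reason, your subtraction from $\sum_{p\in F}\dim F/2=\chi(F)\cdot\dim F/2$ requires $\chi(F)$ to be even whenever $\dim F\equiv 2 \pmod 4$. This fails for non-orientable manifolds in general; $\mathbb{RP}^2$ is again an example.
\end{enumerate}
To make the componentwise approach work, you would have to exploit that $F$ is a fixed component of $\mathbb{Z}_w$ inside the oriented manifold $M$. The proposal contains nothing of that kind.

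The missing idea is that no componentwise information is needed for even $w$. The paper works globally on $M$:
\begin{enumerate}
\item The $t^w$-coefficient of \eqref{eq2} vanishes.
\item Every contribution with at least two nonzero $j_i$ is divisible by $4$.
\item Reducing mod $4$ and using $\epsilon(p)\equiv 1 \pmod 2$ gives $\sum_{d\mid w}\sum_{p}N_p(d)\equiv 0 \pmod 2$.
\item Induction over the divisors of $w$ then shows that $\sum_p N_p(w)$ is even for every $w$.
\end{enumerate}
This uses only the orientation of $M$ and never the orientability of $M^{\mathbb{Z}_w}$. Evenness of the total count is all that is needed for the even-weight pairing with $p\neq q$ or $i\neq j$, as you correctly observed.
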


Thus, as in the case of other types of manifolds, it is natural to ask whether even weights can also be split into pairs such that, in each pair, the weights come from distinct fixed points.

\begin{que} \label{que}
Let the circle group act on a compact oriented manifold with a discrete fixed point set. Can the even weights at all fixed points be split into pairs $(w_{p,i},w_{q,j})$ such that $w_{p,i}=w_{q,j}$ and $p \neq q$?
\end{que}

For an odd weight $w_{p,i}$, we can require that the fixed points $p$ and $q$ lie in the same component of the $\mathbb{Z}_w$-fixed set $M^{\mathbb{Z}_w}$, which is an orientable submanifold of $M$ (see Theorem~\ref{odd}). Here, $w=w_{p,i}=w_{q,j}$.
Note that if $\dim M=4$, then any component of $M^{\mathbb{Z}_w}$ containing an $S^1$-fixed point is always orientable for all $w$; see \cite{JM, J5}. As a result, Question~\ref{que} has an affirmative answer when the dimension of the manifold is at most 4.

\section*{Acknowledgements}

The author would like to thank the anonymous referee for valuable comments and pointing out an error in the proof of Theorem~\ref{match} in an earlier version of this manuscript.

\section{Properties of weights}

For an action of a group $G$ on a manifold $M$, let $M^G$ denote its fixed point set; if $H$ is a subgroup of $G$, then the group $H$ also acts on $M$, and its fixed point set is denoted by $M^H$.

Let the circle group $S^1$ act on an oriented manifold $M$. Let $p$ be an isolated fixed point. The orientation on $M$ induces an orientation on the tangent space $T_pM$ at $p$. The tangent space $T_pM$ of $M$ at $p$ decomposes into the sum of real 2-dimensional irreducible $S^1$-equivariant vector spaces $L_{p,1},\cdots,L_{p,n}$, where $\dim M=2n$. Each vector space $L_{p,i}$ is isomorphic to a complex one-dimensional $S^1$-equivariant complex space, on which the circle group acts as multiplication by $g^{w_{p,i}}$ for all $g \in S^1 \subset \mathbb{C}$, for some non-zero integer $w_{p,i}$. For each $L_{p,i}$, we can choose an orientation of $L_{p,i}$, so that $w_{p,i}$ is positive; this gives an orientation on $L_{p,i}$ and hence on $T_pM=L_{p,1} \oplus \cdots \oplus L_{p,n}$. We call the positive integers $w_{p,1},\cdots,w_{p,n}$ the \textbf{weights} at $p$. The tangent space $T_pM$ has two orientations, one induced by the orientation on $M$ and the other induced by the orientation on $L_{p,1} \oplus \cdots \oplus L_{p,n}$. We define the \textbf{sign} of $p$, denoted $\epsilon_M(p)$, to be $+1$ if the two orientations agree and $-1$ otherwise. If there is no confusion, we shall write $\epsilon_M(p)$ as $\epsilon(p)$, omitting $M$.

For a circle action on a compact oriented manifold with isolated fixed points, the Atiyah-Singer index theorem yields the following formula.

\begin{theo} \emph{[\textbf{Atiyah-Singer index theorem}]} \cite{AS} \label{t22} Let the circle act on a $2n$-dimensional compact oriented manifold $M$ with a discrete fixed point set. Then the signature of $M$ satisfies
\begin{equation} \label{equation}
\displaystyle\textrm{sign}(M) = \sum_{p \in M^{S^1}} \epsilon(p) \cdot \prod_{i=1}^{n} \frac{1+t^{w_{pi}}}{1-t^{w_{pi}}}
\end{equation}
for all indeterminates $t$, and is a constant. \end{theo}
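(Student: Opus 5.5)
The formula is an instance of the Atiyah--Bott--Segal--Singer equivariant index theorem applied to the signature operator, followed by a rigidity argument. I would proceed as follows.

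\textbf{Step 1: the equivariant signature is a Laurent polynomial.} The circle acts by orientation-preserving isometries for an invariant metric, so it commutes with the signature operator $D$ on $M$. Its index is therefore a virtual representation $\ker D - \operatorname{coker} D$ of $S^1$. Its character
\[
\textrm{sign}(g,M) = \operatorname{tr}(g|_{\ker D}) - \operatorname{tr}(g|_{\operatorname{coker} D})
\]
is a finite Laurent polynomial $f(t) \in \mathbb{Z}[t,t^{-1}]$, where $g=t \in S^1$. At $t=1$ it equals the ordinary index, namely $\textrm{sign}(M)$.

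\textbf{Step 2: the fixed point formula.} For a topological generator $g=t$ of $S^1$, the fixed point set of $g$ equals $M^{S^1}$, which is discrete. The Lefschetz fixed point formula then expresses $\textrm{sign}(g,M)$ as a sum of local contributions over $p \in M^{S^1}$. The contribution of an isolated fixed point is computed on $T_pM$ with the orientation coming from $M$, and it is multiplicative over the decomposition $T_pM = L_{p,1}\oplus\cdots\oplus L_{p,n}$.

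On a single complex line on which $t$ acts by $t^{w}$, oriented compatibly with the complex structure, the signature contribution is
\[
\frac{1+t^{w}}{1-t^{w}}.
\]
This is the standard quotient of the symbol of the signature complex restricted to $\Lambda^*$ by $\det(1-g^{-1})$, up to a fixed normalization of $t$ versus $t^{-1}$ which I would pin down once.

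With each $L_{p,i}$ oriented so that $w_{p,i}>0$, the product orientation differs from the orientation of $M$ exactly by $\epsilon(p)$. Reversing the orientation of $M$ at $p$ negates the local signature contribution. Hence the contribution of $p$ is
\[
\epsilon(p)\prod_{i=1}^{n}\frac{1+t^{w_{p,i}}}{1-t^{w_{p,i}}}.
\]
This identifies $f(t)$ with the right-hand side of \eqref{equation} for all $t$ in a dense subset of $S^1$. So the two agree as rational functions of the indeterminate $t$.

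\textbf{Step 3: rigidity.} Let $R(t)$ denote the right-hand side of \eqref{equation}.
\begin{itemize}
\item As $t\to 0$, each factor $\frac{1+t^{w}}{1-t^{w}}$ tends to $1$, so $R(t)$ has a finite limit.
\item As $t\to\infty$, each factor tends to $-1$ because $w>0$, so $R(t)$ is again finite.
\end{itemize}
A Laurent polynomial that stays bounded both at $0$ and at $\infty$ has no terms of negative or positive degree, so it is constant. Evaluating at $t=1$, where $f(1)=\textrm{sign}(M)$ by Step 1, shows that the constant is $\textrm{sign}(M)$. Since $f$ is a polynomial, evaluating it at $t=1$ is legitimate even though the individual terms of $R(t)$ have poles there.

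\textbf{Main obstacle.} The only genuinely delicate step is the sign bookkeeping in Step 2. One must check that the local contribution of a single positively oriented line of weight $w$ is exactly $\frac{1+t^w}{1-t^w}$ rather than its negative or its $t\mapsto t^{-1}$ transform. One must also check that changing the orientation of a summand $L_{p,i}$ replaces $w$ by $-w$, which changes that factor's sign, consistent with the definition of $\epsilon(p)$. I would first verify the single-line calculation directly on $\mathbb{C}$ with the standard action, and then use multiplicativity to conclude.
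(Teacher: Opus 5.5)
Your proof is correct. The paper gives no argument for this statement; it simply quotes the $G$-signature theorem of Atiyah--Singer. So the relevant comparison is with how Atiyah--Singer themselves obtain the formula.

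Your Step~2 is precisely their fixed point formula for the signature operator at isolated fixed points. The difference lies in Step~3. Atiyah--Singer get rigidity for free: $S^1$ is connected, so every $g\in S^1$ is homotopic to the identity and acts trivially on $H^*(M;\mathbb{R})$. It therefore acts trivially on the harmonic forms that make up $\ker D$ and $\operatorname{coker} D$, so $\textrm{sign}(g,M)=\textrm{sign}(M)$ for all $g$ without ever examining the right-hand side. Your argument instead uses only that the character is a Laurent polynomial, and reads off constancy from the finiteness of the fixed-point expression at $t=0$ and $t=\infty$. This is equally valid, and it yields the extra relations $\textrm{sign}(M)=\sum_p\epsilon(p)=(-1)^n\sum_p\epsilon(p)$ as a by-product, but it does more work than is needed here.

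The ``main obstacle'' you flag is harmless. A uniform sign error in the local factor amounts to replacing $t$ by $t^{-1}$, since $\frac{1+t^{-w}}{1-t^{-w}}=-\frac{1+t^{w}}{1-t^{w}}$. Once the right-hand side is known to be constant for all $t$, the substitution $t\mapsto t^{-1}$ shows that both normalizations give the same identity. So you never need to pin the convention down. All you need is that the local factor is $\pm\frac{1+t^{w}}{1-t^{w}}$ with a uniform sign, and that it changes sign when the orientation of $L_{p,i}$ is reversed.
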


Let the circle group act on a compact oriented manifold $M$ with a discrete fixed point set. We shall use the geometric power series expansion $\frac{1}{1-x}=\sum_{i=0}^\infty x_i=1+x+x^2+\cdots$ to rewrite the Atiyah-Singer index formula \eqref{equation} as follows.
\begin{center}
$\displaystyle \textrm{sign}(M) = \sum_{p \in M^{S^1}} \epsilon(p) \prod_{i=1}^{n} \frac{1+t^{w_{p,i}}}{1-t^{w_{p,i}}} =\sum_{p \in M^{S^1}} \epsilon(p) \prod_{i=1}^{n} [(1+t^{w_{p,i}})\sum_{j=0}^{\infty} t^{jw_{p,i}}]=\sum_{p \in M^{S^1}} \epsilon(p) \prod_{i=1}^{n} (1+2\sum_{j=1}^{\infty} t^{jw_{p,i}})$
\end{center}
That is, the following equation holds.
\begin{equation} \label{eq2}
\displaystyle \textrm{sign}(M) = \sum_{p \in M^{S^1}} \epsilon(p) \prod_{i=1}^{n} (1+2\sum_{j=1}^{\infty} t^{jw_{p,i}})
\end{equation}
In Equation~\eqref{eq2}, a fixed point $p$ contributes the term 
\begin{center}
$\displaystyle \epsilon(p) \prod_{i=1}^{n} (1+2\sum_{j=1}^{\infty} t^{jw_{p,i}})$.
\end{center}
In this term, we compute the coefficient of the $t^w$-term.
It has a $t^w$-term if some non-negative integer combination of the weights of $p$ is equal to $w$. Suppose that some non-negative integer combination $\sum_{i=1}^n j_i w_{p,i}$ of the weights $w_{p,i}$ at $p$ is equal to $w$, where $j_i \geq 0$ for all $i$. For a multiset $\{j_1,\cdots,j_n\}$ of non-negative integers, let $m(j_1,\cdots,j_n)$ denote the number of positive elements in $\{j_1,\cdots,j_n\}$.
From this combination $\sum_{i=1}^n j_i w_{p,i}$, the coefficient of the term $\displaystyle t^{\sum_{i=1}^n j_i w_{p,i}}$ is $\epsilon(p) \cdot 2^{m(j_1,\cdots,j_n)}$. Therefore, summing over all non-negative integer combinations of the weights of $p$ that are equal to $w$, it follows that the coefficient of the $t^w$-term in $\epsilon(p) \prod_{i=1}^{n} (1+2\sum_{j=1}^{\infty} t^{jw_{p,i}})$ is
\begin{center}
$\displaystyle \epsilon(p) \sum_{j_1w_{p,1}+\cdots+j_nw_{p,n}=w , 0\leq  j_i} 2^{m(j_1,\cdots,j_n)}$.
\end{center}

We illustrate this with an example. Suppose that a fixed point $p$ has sign $+1$ and weights $\{1,2,3\}$ and $w=3$. The coefficient of $t^3$ in the expression
\begin{center}
$\displaystyle \epsilon(p) (1+2\sum_{j=1}^{\infty} t^{jw_{p,1}})(1+2\sum_{j=1}^{\infty} t^{jw_{p,2}})(1+2\sum_{j=1}^{\infty} t^{jw_{p,3}})$

$\displaystyle =(1+2\sum_{j=1}^{\infty} t^{j})(1+2\sum_{j=1}^{\infty} t^{2j})(1+2\sum_{j=1}^{\infty} t^{3j})$
 \end{center}
is $6=4+2$. Here, $4$ comes from $4t^3=2t \cdot 2t^2 \cdot 1$, where $2t$ comes from the first bracket, $2t^2$ comes from the second bracket, and $1$ comes from the third bracket ($j_1=1$, $j_2=1$, and $j_3=0$), and $2$ comes from $1 \cdot 1 \cdot 2t^3$, where the two $1$'s come from the first and second brackets and $2t^3$ comes from the third bracket ($j_1=0$, $j_2=0$, and $j_3=1$).

With this understood, we show that weights split into pairs.

\begin{theo} \label{match}
Let the circle group $S^1$ act on a compact oriented manifold $M$ with a discrete fixed point set. Let $w$ be a positive integer. Then the following holds.
\begin{center}
$\displaystyle \sum_{p \in M^{S^1}} N_p(w) \equiv 0 \mod 2$.
\end{center}
Here, $N_p(w)$ denotes the number of times $w$ occurs as a weight at $p$.
Consequently, we can split the multiset $\{w_{p,i}\}_{1 \leq i \leq n, p \in M^{S^1}}$ of weights over all fixed points, counted with multiplicity, into pairs $(w_{p,i},w_{q,j})$ such that
\begin{enumerate}
\item $w_{p,i}=w_{q,j}$, and 
\item $p \neq q$ or $i \neq j$.
\end{enumerate}
Here, $\dim M=2n$.
\end{theo}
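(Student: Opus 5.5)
The plan is to read off the coefficient of $t^w$ on both sides of Equation~\eqref{eq2} and reduce it modulo $4$. By Theorem~\ref{t22} the left-hand side $\textrm{sign}(M)$ is a constant. So for every integer $w \geq 1$, the coefficient of $t^w$ on the right-hand side must vanish. By the computation preceding the theorem, this gives
\begin{center}
$\displaystyle 0=\sum_{p \in M^{S^1}} \epsilon(p) \sum_{j_1w_{p,1}+\cdots+j_nw_{p,n}=w , 0\leq j_i} 2^{m(j_1,\cdots,j_n)}$.
\end{center}
Because $w \geq 1$, the all-zero tuple $(j_1,\cdots,j_n)=(0,\cdots,0)$ does not occur, so every tuple in the inner sum has $m(j_1,\cdots,j_n) \geq 1$.

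Next I would split the inner sum by the value of $m$. Any tuple with $m \geq 2$ contributes a multiple of $4$. A tuple with $m=1$ has exactly one nonzero entry $j_i$, with $j_i w_{p,i}=w$. Such tuples correspond bijectively to the indices $i$ for which $w_{p,i}$ divides $w$, and each contributes exactly $2$. Reducing modulo $4$ therefore gives
\begin{center}
$\displaystyle 0 \equiv 2\sum_{p \in M^{S^1}} \epsilon(p)\, \#\{i : w_{p,i} \mid w\} \mod 4$.
\end{center}
Since $\epsilon(p)=\pm 1 \equiv 1 \mod 2$, we may drop the signs and conclude that
\begin{center}
$\displaystyle D(w):=\sum_{p \in M^{S^1}} \#\{i : w_{p,i} \mid w\}=\sum_{d \mid w}\sum_{p \in M^{S^1}} N_p(d)$
\end{center}
is even for every $w \geq 1$.

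The last step is strong induction on $w$. For $w=1$, $D(1)=\sum_p N_p(1)$, which is even. For general $w$, suppose $\sum_p N_p(d)$ is even for every proper divisor $d$ of $w$. Then
\begin{center}
$\displaystyle \sum_p N_p(w)=D(w)-\sum_{d\mid w,\, d<w}\sum_p N_p(d)$
\end{center}
is a difference of even numbers, hence even. Alternatively, Möbius inversion modulo $2$ gives the same conclusion directly.

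For the pairing, fix $w$. The sub-multiset of weights equal to $w$ consists of the entries $w_{p,i}$ indexed by the pairs $(p,i)$ with $w_{p,i}=w$. It has even cardinality, so we pair these index pairs arbitrarily. Any two distinct index pairs $(p,i)\neq(q,j)$ satisfy $p\neq q$ or $i\neq j$, so both conditions of the theorem hold. I expect no real obstacle. The only step needing care is the mod-$4$ bookkeeping: one must check that the $m=1$ terms correspond exactly to the weights dividing $w$, and that all other terms vanish modulo $4$.
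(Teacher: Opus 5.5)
Your proposal is correct and follows essentially the same route as the paper. Both arguments take the $t^w$-coefficient of Equation~\eqref{eq2}, discard the $m\geq 2$ terms modulo $4$, and identify the $m=1$ tuples with weights $w_{p,i}$ dividing $w$ (the paper indexes these by $k=\max\{j_i\}$, i.e.\ $k=w/d$). Both then finish by strong induction over the divisors of $w$; your quantity $D(w)=\sum_{d\mid w}\sum_p N_p(d)$ and the M\"obius-inversion remark are just a slightly tidier presentation of that step.
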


\begin{proof}
By Theorem \ref{t22}, the signature of $M$ satisfies
\begin{center}
$\displaystyle \textrm{sign}(M) = \sum_{p \in M^{S^1}} \epsilon(p) \prod_{i=1}^{n} \frac{1+t^{w_{p,i}}}{1-t^{w_{p,i}}} = \sum_{p \in M^{S^1}} \epsilon(p) \prod_{i=1}^{n} \frac{1+t^{w_{p,i}}}{1-t^{w_{p,i}}}=\sum_{p \in M^{S^1}} \epsilon(p) \prod_{i=1}^{n} [(1+t^{w_{p,i}})\sum_{j=0}^{\infty} t^{jw_{p,i}}]=\sum_{p \in M^{S^1}} \epsilon(p) \prod_{i=1}^{n} (1+2\sum_{j=1}^{\infty} t^{jw_{p,i}})$
\end{center}
for all indeterminates $t$, and is a constant. The idea of the proof  is to compare the coefficients of the $t^w$-terms in the last expression.

As discussed above, for a fixed point $p$, the coefficient of the $t^w$-term in $\displaystyle \epsilon(p) \prod_{i=1}^{n} (1+2\sum_{j=1}^{\infty} t^{jw_{p,i}})$ is
\begin{center}
$\displaystyle \epsilon(p) \sum_{\substack{j_1w_{p,1}+\cdots+j_nw_{p,n}=w \\ 0\leq  j_i}} 2^{m(j_1,\cdots,j_n)}$,
\end{center}
where $j_1,\cdots,j_n$ are non-negative integers and $m(j_1,\cdots,j_n)$ is the number of positive elements in the multiset $\{j_1,\cdots,j_n\}$.
Since the sum of the coefficients of the $t^w$-terms over all fixed points is zero, comparing the coefficients of the $t^w$-terms over all fixed points, it follows that
\begin{center}
$\displaystyle 0=\sum_{p \in M^{S^1}} \epsilon(p) \sum_{\substack{j_1w_{p,1}+\cdots+j_nw_{p,n}=w \\ 0\leq  j_i}} 2^{m(j_1,\cdots,j_n)}$.
\end{center}
For a fixed point $p$, if $m(j_1,\cdots,j_n)$ is greater than $1$ where $j_1w_{p,1}+\cdots+j_nw_{p,n}=w$, the number $2^{m(j_1,\cdots,j_n)}$ is a multiple of $4$. Therefore, modulo $4$, from the above equation it follows that 
\begin{center}
$\displaystyle \sum_{p \in M^{S^1}} \epsilon(p) \sum_{\substack{j_1w_{p,1}+\cdots+j_nw_{p,n}=w \\ 0\leq  j_i \\ m(j_1,\cdots,j_n)=1}}2 \equiv 0 \mod 4$.
\end{center}
Therefore,
\begin{equation} \label{eq3}
\displaystyle \sum_{p \in M^{S^1}} \epsilon(p) \sum_{\substack{j_1w_{p,1}+\cdots+j_nw_{p,n}=w \\ 0\leq  j_i \\ m(j_1,\cdots,j_n)=1}}1 \equiv 0 \mod 2.
\end{equation}

With this, we shall prove this theorem by induction on $w$. First, suppose that there does not exist a weight $w_{p,i}$ that is smaller than $w$ and divides $w$.
Then the cardinality of the set 
\begin{center}
$\{(j_1,\cdots,j_n) \in \mathbb{Z}^n : j_1w_{p,1}+\cdots+j_nw_{p,n}=w , 0\leq  j_i, m(j_1,\cdots,j_n)=1\}$
\end{center}
is equal to $N_p(w)$.
Therefore, the sum
\begin{center} 
$\displaystyle \sum_{\substack{j_1w_{p,1}+\cdots+j_nw_{p,n}=w \\ 0\leq  j_i \\ m(j_1,\cdots,j_n)=1}} 1$
\end{center}
is equal to $N_p(w)$ for each $p \in M^{S^1}$. Hence, by Equation~\eqref{eq3}, this theorem holds.

Next, suppose that this theorem holds for each positive integer $w'$ that is smaller than $w$ and divides $w$. 
Let $k$ be a positive integer. 
Suppose that an element $(j_1,\cdots,j_n)$ of $\mathbb{Z}^n$ satisfies $j_1w_{p,1}+\cdots+j_n w_{p,n}=w$, $j_i \geq 0$ for all $i$, $m(j_1,\cdots,j_n)=1$, and $\max\{j_i\}=k$. That is, exactly one of $j_i$ is $k$, say $j_\ell=k$, all the other $j_i$ are zero, and $j_\ell w_{p,\ell}=w$. Let $j_i'=j_i/k$ for all $i$. Then
$j_1 w_{p,1} + \cdots j_n w_{p,n}=w$ is equivalent to $j_1' w_{p,1} + \cdots j_1' w_{p,n}=w/k$. Moreover, the cardinality of the set
$$\left\{(j_1',\cdots,j_n') \in \mathbb{Z}^n \biggm| \begin{array}{l} j_1'w_{p,1}+\cdots+j_n'w_{p,n}=w/k,0 \leq j_i', \\ m(j_1',\cdots,j_n')=1,\max\{j_i'\}=1 \end{array}\right\}$$
is equal to $N_p(w/k)$ for each positive integer $k$.
Therefore,
\begin{center}
$\displaystyle \sum_{p \in M^{S^1}} \epsilon(p) \sum_{\substack{j_1w_{p,1}+\cdots+j_nw_{p,n}=w \\ 0\leq  j_i \\ m(j_1,\cdots,j_n)=1}} 1$ 

$=\displaystyle \sum_{k} \sum_{p \in M^{S^1}} \epsilon(p) \sum_{\substack{j_1w_{p,1}+\cdots+j_nw_{p,n}=w \\ 0\leq  j_i \\ m(j_1,\cdots,j_n)=1 \\ \max\{j_i\} =k}} 1$

$=\displaystyle \sum_{p \in M^{S^1}} \epsilon(p) \sum_{\substack{j_1w_{p,1}+\cdots+j_nw_{p,n}=w \\ 0\leq  j_i \\ m(j_1,\cdots,j_n)=1 \\ \max\{j_i\} =1}} 1 + \sum_{k>1} \sum_{p \in M^{S^1}} \epsilon(p) \sum_{\substack{j_1w_{p,1}+\cdots+j_nw_{p,n}=w \\ 0\leq  j_i \\ m(j_1,\cdots,j_n)=1 \\ \max\{j_i\} =k}} 1$

$=\displaystyle \sum_{p \in M^{S^1}} \epsilon(p) \cdot N_p(w) + \sum_{k>1} \sum_{p \in M^{S^1}} \epsilon(p) \sum_{\substack{j_1'w_{p,1}+\cdots+j_n'w_{p,n}=w/k \\ 0\leq  j_i' \\ m(j_1',\cdots,j_n')=1 \\ \max\{j_i'\} =1}} 1$

$=\displaystyle \sum_{p \in M^{S^1}} \epsilon(p) \cdot N_p(w) + \sum_{k>1} \sum_{p \in M^{S^1}} \epsilon(p) \cdot N_p(w/k)$.
\end{center}
Since this sum is congruent to $0$ modulo $2$ by Equation~\eqref{eq3} and $$\sum_{p \in M^{S^1}} \epsilon(p) \cdot N_p(w/k) \equiv 0 \mod 2$$ for all $k>1$ by inductive hypothesis, this theorem holds.
\end{proof}

If $w$ is odd, a stronger result holds. To show this, we need the following property for the smallest weight, which follows by comparing the coefficients of $t^w$-terms in Equation~\eqref{eq2} where $w$ is the smallest weight.

\begin{lem} \label{small} \cite{J3, M} Let the circle act on a $2n$-dimensional compact oriented manifold $M$ with a discrete fixed point set. Let $w=\min\{w_{p,i} \, | \, 1 \leq i \leq n, p \in M^{S^1}\}$. Then
\begin{center}
$\displaystyle \sum_{p \in M^{S^1}, \epsilon(p)=+1} N_p(w)=\sum_{p \in M^{S^1}, \epsilon(p)=-1} N_p(w)$,
\end{center}
where $N_p(w)=|\{i \, : \, w_{p,i}=w, 1 \leq i \leq n, p \in M^{S^1}\}|$ is the number of times $w$ occurs as a weight at $p$. \end{lem}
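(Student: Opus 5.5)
We compare the coefficients of $t^w$ on both sides of Equation~\eqref{eq2}, where $w$ is the smallest weight. By Theorem~\ref{t22} the right-hand side is a constant (indeed a polynomial identity in $t$). So for every positive exponent, the total coefficient over all fixed points vanishes. Concretely, as computed before Theorem~\ref{match}, for each $p$ the coefficient of $t^w$ in $\epsilon(p)\prod_{i}(1+2\sum_{j\ge1}t^{jw_{p,i}})$ is
\begin{center}
$\displaystyle \epsilon(p) \sum_{\substack{j_1w_{p,1}+\cdots+j_nw_{p,n}=w \\ 0\leq  j_i}} 2^{m(j_1,\cdots,j_n)}$,
\end{center}
and summing over $p \in M^{S^1}$ gives $0$.

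The key step is to identify which tuples $(j_1,\cdots,j_n)$ can contribute when $w$ is minimal.

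\begin{enumerate}
\item Every weight at every fixed point satisfies $w_{p,i} \geq w$.
\item Hence in any solution of $\sum_i j_i w_{p,i}=w$ with $j_i\ge 0$, if some $j_\ell>0$ then $j_\ell w_{p,\ell} \leq w$. This forces $j_\ell=1$, $w_{p,\ell}=w$, and all other $j_i=0$.
\item So the solutions are exactly the indices $\ell$ with $w_{p,\ell}=w$, each contributing $2^{m}=2^1=2$.
\item Therefore the coefficient of $t^w$ contributed by $p$ is exactly $2\epsilon(p)N_p(w)$.
\end{enumerate}

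No divisibility or induction issue arises, unlike in Theorem~\ref{match}: a weight smaller than $w$ that divides $w$ cannot exist by minimality.

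Summing, we obtain $0=2\sum_{p}\epsilon(p)N_p(w)$. Since $\epsilon(p)=\pm1$, splitting the sum according to the sign gives the claimed equality
\begin{center}
$\displaystyle \sum_{\epsilon(p)=+1}N_p(w)=\sum_{\epsilon(p)=-1}N_p(w)$.
\end{center}

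The only point needing care is the justification that coefficient comparison is legitimate. The index formula holds as an identity of rational functions in $t$, and its expansion as a power series in $t$ near $0$ is valid. The left side $\textrm{sign}(M)$ is constant, so all coefficients of $t^w$ with $w\geq1$ vanish. I expect no real obstacle beyond this bookkeeping.
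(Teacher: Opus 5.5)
Your proposal is correct and is the same argument the paper points to: compare the $t^w$-coefficients in Equation~\eqref{eq2} for the minimal weight $w$. Minimality forces each contributing tuple to have a single $j_\ell=1$ with $w_{p,\ell}=w$, so the total coefficient is $2\sum_p \epsilon(p)N_p(w)=0$, which gives the claimed equality.
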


Given an $S^1$-manifold $M$, the cyclic group $\mathbb{Z}_w$ of order $w$ also acts on $M$ as a subgroup of $S^1$.
For $w$ odd, the $\mathbb{Z}_w$-fixed set of an orientable $S^1$-manifold is orientable.

\begin{lem} \label{odd-ori}
Let the circle group act on an orientable manifold $M$. Let $w \geq 1$ be an odd integer. Then any component of the $\mathbb{Z}_w$-fixed set $M^{\mathbb{Z}_w}$ is orientable.
\end{lem}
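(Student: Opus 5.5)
The plan is to reduce the claim to linear algebra at a single point of each component, and then use that the component is connected. Let $F$ be a component of $M^{\mathbb{Z}_w}$. It is a closed submanifold of $M$, and it is preserved by $S^1$: the circle is connected and abelian, so it commutes with $\mathbb{Z}_w$ and permutes the components of $M^{\mathbb{Z}_w}$ continuously, hence fixes each one. Fix a generator $g=e^{2\pi i/w}$ of $\mathbb{Z}_w$ and a point $x \in F$. The element $g$ acts on $T_xM$ by an orthogonal map; average a Riemannian metric over $S^1$ to arrange this. By the standard slice description, $T_xF$ is the $+1$-eigenspace of $dg_x$. The normal space $\nu_x=T_xM/T_xF$ is therefore the sum of the other eigenspaces. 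Since $g^w=1$, the eigenvalues on $\nu_x$ are $w$-th roots of unity different from $1$. Because $w$ is odd, $-1$ is not a $w$-th root of unity, so every eigenvalue on $\nu_x$ is non-real.

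Next I would put a complex structure on the normal bundle. The non-real eigenvalues come in conjugate pairs $\lambda,\bar\lambda$, with $\lambda=e^{2\pi i k/w}$ and $0<k<w/2$. On the real subspace $V_\lambda \subset \nu_x$ belonging to the pair $\{\lambda,\bar\lambda\}$, the map $dg$ acts as rotation by the angle $2\pi k/w$. Define $J$ on $V_\lambda$ by $J=(dg-\cos(2\pi k/w))/\sin(2\pi k/w)$; this satisfies $J^2=-1$. Doing this on each $V_\lambda$ gives a complex structure on $\nu_x$. The definition is canonical: it is built from $dg$ and the $S^1$-invariant metric, with the choice of representative $k\in(0,w/2)$ fixed once and for all. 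It varies continuously with $x$, and the splitting $\nu F=\bigoplus_\lambda V_\lambda$ is a splitting into smooth subbundles over $F$, because the eigenvalues are locally constant on the connected set $F$. Hence the normal bundle $\nu F$ is a complex vector bundle, and so it is orientable.

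Finally, $TM|_F \cong TF \oplus \nu F$, with $TM|_F$ oriented because $M$ is orientable, and $\nu F$ oriented by its complex structure. Then $w_1(TF)=w_1(TM|_F)-w_1(\nu F)=0$, or equivalently orient $TF$ as the quotient orientation. So $F$ is orientable.

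The main obstacle is the smooth-bundle step: making sure the eigenspace decomposition and $J$ are globally well defined on $F$. I would handle it by working with the single fixed generator $g$ and noting that the spectrum of $dg_x$ on $\nu_x$ is locally constant along the connected set $F$. Oddness of $w$ is used exactly once, to rule out the eigenvalue $-1$. For even $w$, a $-1$-eigenspace could carry no canonical complex structure, and that is where orientability can fail.
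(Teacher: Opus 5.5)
Your proof is correct and follows the same route as the paper. For odd $w$ the generator $g$ of $\mathbb{Z}_w$ has only non-real eigenvalues on the normal space, so the normal bundle breaks into two-dimensional rotation pieces and is orientable, and therefore $F$ is orientable because $M$ is. Your canonical complex structure $J=(dg-\cos(2\pi k/w))/\sin(2\pi k/w)$ on each isotypic subbundle makes the passage from fibers to the whole bundle more careful than the paper does. The paper only observes that $g$ acts orientation-preservingly on each fiber $N_mF$, which holds for even $w$ as well, so it is really the complex structure you build that makes the normal bundle orientable.
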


\begin{proof}
Let $F$ be a component of $M^{\mathbb{Z}_w}$. Let $m$ be a point in $F$. Since $w$ is odd, the $\mathbb{Z}_w$-representation of the normal space $N_mF$ of $F$ in $M$ at $m$ decomposes into irreducibles, where each irreducible is isomoprhic to a complex one-dimensional vector space on which a generator of $\mathbb{Z}_w$ acts as multiplication by a root of unity. Thus, the $\mathbb{Z}_w$-action on each irreducible is orientation preserving and so is $N_mF$. This holds for all $m \in F$ and hence the normal bundle $NF$ of $F$ in $M$ is orientable, and thus $F$ is orientable.
\end{proof}

We show that odd weights can be split into pairs, such that weights in each pair come from distinct fixed points.

\begin{theo} \label{odd}
Let the circle group $S^1$ act effectively on a compact oriented manifold $M$ with a discrete fixed point set. Let $w$ be an odd positive integer. Then the following hold.
\begin{enumerate}
\item For any component $F$ of $M^{\mathbb{Z}_w}$,
\begin{center}
$\displaystyle \sum_{p \in F,\epsilon_F(p)=+1} N_p(w)=\sum_{p \in F,\epsilon_F(p)=-1} N_p(w)$,
\end{center}
where $F$ is given any orientation. 
\item We can split the multiset $\{w_{p,i} \mid w_{p,i}=w, 1 \leq i \leq n, p \in M^{S^1}\}$ of weights that are equal to $w$ into pairs $(w_{p,i},w_{q,j})$ so that $p$ and $q$ are distinct, and $p$ and $q$ lie in the same component of $M^{\mathbb{Z}_{w}}$.
\item For any fixed point $q$, 
\begin{center} 
$\displaystyle N_q(w) \leq \sum_{p \in M^{S^1}, p \neq q} N_p(w)$.
\end{center}
\end{enumerate}
Here, $N_p(w)=|\{i \, : \, w_{p,i}=w, 1 \leq i \leq n, p \in M^{S^1}\}|$ is the number of times $w$ occurs as a weight at $p$.
\end{theo}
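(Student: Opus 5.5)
The plan is to restrict the action to a component $F$ of $M^{\mathbb{Z}_w}$ and apply Lemma~\ref{small} to the induced $S^1$-action on $F$. By Lemma~\ref{odd-ori}, $F$ is orientable, and we fix any orientation on it. Since $F$ is a closed submanifold of the compact manifold $M$, it is compact, and $F$ is $S^1$-invariant because $S^1$ is abelian. The fixed points of $S^1$ on $F$ are exactly the fixed points of $M$ lying in $F$, so they are isolated.

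At a fixed point $p \in F$, the tangent space $T_pF$ is the $\mathbb{Z}_w$-fixed subspace of $T_pM$. It is therefore the sum of those $L_{p,i}$ with $w \mid w_{p,i}$, so the weights of $F$ at $p$ are exactly the weights $w_{p,i}$ of $M$ that are multiples of $w$. The action on $F$ need not be effective, since every weight is divisible by $w$. We can either pass to the effective quotient action of $S^1/\mathbb{Z}_w$, which divides all weights by $w$, or observe that Lemma~\ref{small} and the Atiyah--Singer formula do not use effectiveness.

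If $F$ contains no $S^1$-fixed point, or if no weight equal to $w$ occurs at any fixed point in $F$, then part (1) holds trivially. Otherwise $w$ is the smallest weight of the action on $F$, because every weight of $F$ is a positive multiple of $w$. Lemma~\ref{small} applied to $F$, with signs $\epsilon_F$, then gives
$\sum_{p \in F,\epsilon_F(p)=+1} N^F_p(w)=\sum_{p \in F,\epsilon_F(p)=-1} N^F_p(w)$.
Here $N^F_p(w)=N_p(w)$, since every occurrence of $w$ as a weight of $M$ at $p$ lies in $T_pF$. This proves (1). Reversing the orientation of $F$ only swaps the two sides, so the statement holds for either orientation.

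For (2), we work one component $F$ at a time. The weights equal to $w$ at points of $F$ carry labels $\epsilon_F(p)=\pm1$, and by (1) the two label classes have the same size. We pair each weight at a point with $\epsilon_F=+1$ with a weight at a point with $\epsilon_F=-1$, using any bijection between the two classes. Paired points have opposite signs, so they are distinct, and they lie in the same component $F$. Every fixed point lies in some component of $M^{\mathbb{Z}_w}$, so taking the union over all components covers the whole multiset.

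Part (3) follows from (2): each copy of $w$ at $q$ is paired with a copy at some $p\neq q$, which gives an injection and hence $N_q(w)\le\sum_{p\ne q}N_p(w)$.

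The main obstacle is the careful verification that Lemma~\ref{small} applies to $F$. We need to check that the weights of $F$ at $p$ are exactly the multiples of $w$ among the $w_{p,i}$. We also need that the sign convention $\epsilon_F$ is computed with these weights and the chosen orientation of $F$. Finally, if Lemma~\ref{small} is read as requiring effectiveness, we must confirm that non-effectiveness causes no issue, either via the quotient action or by noting that the proof only compares $t^w$ coefficients in Theorem~\ref{t22} applied to $F$. Oddness of $w$ is used only through Lemma~\ref{odd-ori}, to guarantee that $F$ is orientable.
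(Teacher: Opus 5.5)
Your proposal is correct and follows the paper's proof: orient the component $F$ via Lemma~\ref{odd-ori}, observe that the weights of $T_pF$ are exactly the multiples of $w$ (so $N^F_p(w)=N_p(w)$), apply Lemma~\ref{small} to the $S^1$-action on $F$, then pair weights at points of opposite sign $\epsilon_F$ within each component and deduce (3) from (2). Your extra remarks fill in points the paper passes over silently: the trivial case where $w$ does not occur as a weight in $F$, and the fact that the restricted action on $F$ is not effective but this does no harm.
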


\begin{proof}
Let $F$ be a component of $M^{\mathbb{Z}_w}$. The component $F$ is a closed submanifold of $M$. By Lemma~\ref{odd-ori}, $F$ is orientable; pick any orientation of $F$. The circle group $S^1$ acts on $F$ as a restriction, and its fixed point set $F^{S^1}$ is $F^{S^1}=M^{S^1} \cap F$ and thus discrete. Moreover, $x$ is a weight in $T_pF$ if and only if $x$ is a multiple of $w$. This means that $N_p^F(w)=N_p(w)$, where $N_p^F(w)$ is the multiplicity of the weight $w$ in $T_pF$.

We apply Lemma~\ref{small} to the $S^1$-action on $F$ to have
\begin{center}
$\displaystyle \sum_{p \in F^{S^1}, \epsilon_F(p)=+1} N_p(w)=\sum_{p \in F^{S^1}, \epsilon_F(p)=-1} N_p(w)$.
\end{center}
Thus, the first claim holds. Moreover, the displayed equation means that each time $w$ occurs as a weight $w_{p,i}$ at a fixed point $p$ with $\epsilon_F(p)=+1$, 
$w$ occurs as a weight $w_{q,j}$ at a fixed point $q$ with $\epsilon_F(p)=+1$; we take $(w_{p,i},w_{q,j})$ as a pair. Repeating this argument for all fixed components of $M^{\mathbb{Z}_w}$, the second claim follows. 
The last claim follows from the second claim.
\end{proof}

\end{document}